\documentclass{amsart}
\usepackage{latexsym,amssymb,amsmath,amsthm,latexsym}
\usepackage{amsfonts}
\theoremstyle{definition}

\newtheorem{example}{Example}
\newtheorem{corollary}{Corollary}
\newtheorem{note}{Note}
\newtheorem{proposition}{Proposition}
\newtheorem{lemma}{Lemma}
\newtheorem{remark}{Remark}

\newtheorem{identity}{Identity}
\title{Pascal Triangle and Restricted Words}
\author{Milan Janji\'c}
\date{\today}
\begin{document}
\maketitle
\begin{center}Department for Mathematics and Informatics, University of Banja
Luka,\end{center}
 \begin{center}Republic of Srpska, BA\end{center}
\begin{abstract} We continue  to investigate combinatorial properties of functions
$f_m$ and $c_m$ considered in our previous papers. They depend on an initial arithmetic function $f_0$.
In this paper, the values of $f_0$ are the binomial coefficients.

 We first consider the case when the values of $f_0$ are the binomial coefficients from a row of the Pascal triangle. The values of $f_0$ consider next  are the binomial coefficients from a diagonal of the Pascal triangle. In two final cases, the values of $f_0$ are the central binomial coefficients and its adjacent neighbors. In each case, we derive an explicit formula for $c_1(n,k)$ and give its interpretation in terms of restricted words. In the first two cases, we also consider the functions $f_m$ and $c_m$, for $(m>0)$.
  \end{abstract}

\section{Introduction} In this paper, we continue to investigate the problem of the enumeration  of restricted words. Previously, the functions $f_m(n)$ and $c_m(n,k)$ were defined as follows. For an initial arithmetic function $f_0$, $f_m(m>0)$ is the $m$th invert transform of $f_0$. The function $c_m(n,k)$ was defined in the following  way:
\begin{equation}\label{cmnk}c_m(n,k)=\sum_{i_1+i_2+\cdots+i_k=n}f_{m-1}(i_1)\cdot f_{m-1}(i_2)\cdots f_{m-1}(i_k),\end{equation}
 where the sum is over positive $i_1,i_2,\ldots,i_k$.
  Its connections with the problem of the enumeration of restricted words were consider.
A number of results of this kind is obtained in Janji\'c~\cite{ja1,ja2,ja3}. Some results  have also been considered by other authors, for instance, in~\cite{bir1,bir2,jma,mw}.

 In this paper, we derive results for four types of initial functions,  which have different kind  of the binomial coefficients as values.  In the first case, the values of $f_0$ are the binomial coefficients from a row of the Pascal triangle. In the second case, the values  are  from a diagonal of the Pascal triangle. In the remaining cases, the values of $f_0$ are the central binomial coefficients and its adjacent neighbors.

In the first two cases, we describe the restricted words counted by $f_m$ and $c_m$, and derive explicit formulas for these functions.
In the last two cases, we consider the function $c_1$ only.

\section{Rows of the Pascal triangle}
For a positive integer $a$, we want to enumerate words over the finite alphabet $\{0,1,\ldots,a-1,\ldots\}$, in which letters in words over $\{0,1,\ldots,a-1\}$ are arranged into ascending order. We let $\mathcal P_1$ denote this property.

We define
 \begin{equation*}f_0(n)={a\choose n-1},(n=1,2,\ldots).\end{equation*}
 It is clear that $f_0(n)$ is the number of words of length  $n-1$ over the alphabet $\{0,1,\ldots,a-1\}$ satisfying $\mathcal P_1$. Since $f_0(1)=1$,  using~Janji\'c~\cite[Proposition 12]{ja3}, we obtain the following combinatorial meaning of $c_m(n,k)$.

\begin{corollary}The number of words of length $n-1$ over the alphabet $\{0,1,\ldots,a-1,\ldots, a+m-1\}$ having $k-1$ letters equal to $a+m-1$ and satisfying $\mathcal P_1$ is $c_m(n,k)$.
\end{corollary}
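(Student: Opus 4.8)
The plan is to recognize the statement as the specialization of Proposition~12 of~\cite{ja3} to the present initial function, so that the real task is to check that $f_0$ meets the hypotheses of that proposition and to read off what its conclusion asserts in this case. First I would record the two facts that drive the whole argument: that $f_0(1)={a\choose 0}=1$, the empty word being the unique ascending word of length $0$, and that, as already observed, $f_0(n)={a\choose n-1}$ counts exactly the words of length $n-1$ over $\{0,1,\ldots,a-1\}$ satisfying $\mathcal P_1$. The normalization $f_0(1)=1$ is precisely what Proposition~12 requires, and it is what allows a part of size $1$ in a composition of $n$ to correspond to an empty block.

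Next I would make explicit how the extended alphabet and the separating letter arise from the defining sum. One reads $c_m(n,k)=\sum_{i_1+\cdots+i_k=n}f_{m-1}(i_1)\cdots f_{m-1}(i_k)$ as a block decomposition: the $k-1$ copies of the largest letter $a+m-1$ act as separators cutting a word of length $n-1$ into $k$ consecutive factors, the $j$th factor being a word of length $i_j-1$ over the smaller alphabet $\{0,1,\ldots,a+m-2\}$. Adding lengths gives $\sum_j (i_j-1)+(k-1)=n-1$ with exactly $k-1$ separators, matching the statement, while each $f_{m-1}(i_j)$ is, inductively, the number of admissible factors of length $i_j-1$; the product then ranges over all independent choices of the $k$ factors.

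The step I expect to carry the weight is verifying that this decomposition is a genuine bijection, i.e. that $\mathcal P_1$ is a \emph{local} property that factorizes across the cuts made by the separator. Concretely, because every letter that is $\ge a$ (in particular each $a+m-1$) lies outside $\{0,1,\ldots,a-1\}$, no maximal run of letters from $\{0,1,\ldots,a-1\}$ can straddle a separator; hence the full word satisfies $\mathcal P_1$ if and only if each of its $k$ factors does. This compatibility is exactly what identifies the right-hand product with the number of words over $\{0,1,\ldots,a+m-1\}$ having $k-1$ letters equal to $a+m-1$ and satisfying $\mathcal P_1$, and it is also what makes the induction on $m$ close up, since summing $c_m(n,k)$ over $k$ recovers $f_m(n)$ as the count of all $\mathcal P_1$-words of length $n-1$ over $\{0,1,\ldots,a+m-1\}$, which feeds the next level. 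Granting Proposition~12, the remaining work is only this bookkeeping, so I would present the argument as a direct application of that proposition together with the observation $f_0(1)=1$.
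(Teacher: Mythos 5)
Your proposal is correct and follows essentially the same route as the paper, whose entire proof consists of noting $f_0(1)=1$ (together with the fact that $f_0(n)={a\choose n-1}$ counts the ascending words of length $n-1$ over $\{0,1,\ldots,a-1\}$) and invoking Proposition~12 of~\cite{ja3}. Your additional unpacking of the separator/block decomposition and the locality of $\mathcal P_1$ is a faithful account of what that cited proposition does internally, not a genuinely different argument.
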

According to~\cite[Corollary 2]{ja3}, we get
\begin{corollary}The number of words of length $n-1$ over the alphabet    $\{0,1,\ldots,a-1,\ldots, a+m-1\}$ satisfying $\mathcal P_1$ is $f_m(n)$.
\end{corollary}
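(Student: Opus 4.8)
The plan is to deduce the statement from the single general identity
\begin{equation*}
f_m(n)=\sum_{k=1}^{n}c_m(n,k),
\end{equation*}
together with the combinatorial meaning of $c_m(n,k)$ already recorded in Corollary 1. Once this identity is in hand the result is immediate: Corollary 1 says that $c_m(n,k)$ counts the words of length $n-1$ over $\{0,1,\ldots,a+m-1\}$ that satisfy $\mathcal P_1$ and contain exactly $k-1$ occurrences of the largest letter $a+m-1$, and summing over $k$ simply removes the restriction on the number of such occurrences, leaving all $\mathcal P_1$-words of length $n-1$ over that alphabet.

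First I would establish the identity by generating functions. Writing $F_{m-1}(x)=\sum_{n\ge1}f_{m-1}(n)x^n$, definition~\eqref{cmnk} gives $c_m(n,k)=[x^n]F_{m-1}(x)^k$, since $c_m(n,k)$ is exactly the sum of the products $f_{m-1}(i_1)\cdots f_{m-1}(i_k)$ over compositions of $n$ into $k$ positive parts. Summing,
\begin{equation*}
\sum_{k\ge1}c_m(n,k)=[x^n]\sum_{k\ge1}F_{m-1}(x)^k=[x^n]\frac{F_{m-1}(x)}{1-F_{m-1}(x)}.
\end{equation*}
Since $f_m$ is the invert transform of $f_{m-1}$, its generating function is precisely $\frac{F_{m-1}(x)}{1-F_{m-1}(x)}$, which yields $f_m(n)=\sum_{k\ge1}c_m(n,k)$; as $c_m(n,k)=0$ for $k>n$, this is the finite sum above. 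This is the content of \cite[Corollary 2]{ja3}, which may simply be cited.

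The combinatorial step is then a partition argument. For a fixed length $n-1$, the number $j$ of letters equal to $a+m-1$ in a $\mathcal P_1$-word over $\{0,1,\ldots,a+m-1\}$ can be any integer with $0\le j\le n-1$, and the events ``exactly $j$ top letters'' are mutually exclusive and exhaust all such words. Setting $k=j+1$ and invoking Corollary 1, the class with exactly $j$ top letters has cardinality $c_m(n,j+1)$, so the total count is $\sum_{j=0}^{n-1}c_m(n,j+1)=\sum_{k=1}^{n}c_m(n,k)=f_m(n)$.

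I expect the only genuinely delicate point to be the bookkeeping at the two extremes of the range of $k$: the case $k=1$, where there is no occurrence of $a+m-1$, so the word lives over $\{0,\ldots,a+m-2\}$ and is counted by $c_m(n,1)=f_{m-1}(n)$ (consistently with the previous corollary applied to $m-1$), and the case $k=n$, where all letters equal $a+m-1$, the single word being counted by $c_m(n,n)=f_{m-1}(1)^n=1$ since $f_{m-1}(1)=1$. Verifying that these boundary terms agree with the direct count confirms that the sum over $k$ neither misses nor double-counts any word, after which the identity and Corollary 1 finish the proof.
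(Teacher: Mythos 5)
Your proposal is correct and follows essentially the same route as the paper, which simply cites \cite[Corollary 2]{ja3}: that cited result is exactly the identity $f_m(n)=\sum_{k=1}^{n}c_m(n,k)$ combined with the interpretation of $c_m(n,k)$, so summing Corollary 1 over the number of occurrences of the top letter $a+m-1$ is precisely the intended argument. Your generating-function derivation of the identity (using that $f_m$ is the invert transform of $f_{m-1}$) and the boundary checks at $k=1$ and $k=n$ merely make explicit what the citation encapsulates.
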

 We next derive an explicit formula for $c_1(n,k)$.
\begin{proposition}\label{pb1} We have
 \begin{equation*}c_1(n,k)={ak\choose n-k}.\end{equation*}
  \end{proposition}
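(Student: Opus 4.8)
The plan is to unwind the definition of $c_1$ and collapse the resulting multiple sum into a single binomial coefficient by a generating-function argument. Putting $m=1$ in~\eqref{cmnk} gives $f_{m-1}=f_0$, so that
\begin{equation*}
c_1(n,k)=\sum_{i_1+i_2+\cdots+i_k=n}\binom{a}{i_1-1}\binom{a}{i_2-1}\cdots\binom{a}{i_k-1},
\end{equation*}
the sum being over positive integers $i_1,\ldots,i_k$.

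First I would perform the index shift $j_\ell=i_\ell-1$, which replaces each positive $i_\ell$ by a nonnegative $j_\ell$ and turns the constraint $i_1+\cdots+i_k=n$ into $j_1+\cdots+j_k=n-k$. The sum becomes
\begin{equation*}
c_1(n,k)=\sum_{j_1+j_2+\cdots+j_k=n-k}\binom{a}{j_1}\binom{a}{j_2}\cdots\binom{a}{j_k},
\end{equation*}
now taken over nonnegative integers $j_\ell$.

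Next I would read this off as a coefficient. Because $\binom{a}{j}$ is the coefficient of $x^j$ in $(1+x)^a$, the displayed sum is exactly the coefficient of $x^{n-k}$ in the $k$-fold product $\bigl((1+x)^a\bigr)^k=(1+x)^{ak}$. Extracting that coefficient yields $\binom{ak}{n-k}$, which is the asserted value; equivalently, one may simply invoke the iterated Vandermonde convolution.

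I expect no genuine obstacle: the computation is routine once the definition is expanded, and the only step needing care is the index shift together with correctly tracking which variables run over positive and which over nonnegative integers. All the substance is contained in the identity $(1+x)^{ak}=\prod_{\ell=1}^{k}(1+x)^a$ and the comparison of the coefficients of $x^{n-k}$ on both sides.
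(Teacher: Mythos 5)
Your proof is correct, but it follows a genuinely different route from the paper's. The paper argues by induction on $k$: the base case $c_1(n,1)=f_0(n)$ is taken from~\cite[Equation (3)]{ja2}, the inductive step invokes the recurrence of~\cite[Equation (3)]{ja3} to write $c_1(n,k)=\sum_{i=1}^{n-k+1}\binom{a}{i-1}\binom{ak-a}{n-i-k+1}$, and a single Vandermonde convolution finishes each step. You instead expand the definition~\eqref{cmnk} directly, shift all $k$ summation indices at once, and identify the resulting sum over nonnegative $j_1+\cdots+j_k=n-k$ as the coefficient of $x^{n-k}$ in $\bigl((1+x)^a\bigr)^k=(1+x)^{ak}$ --- in effect applying the $k$-fold Vandermonde convolution in one stroke rather than peeling off one factor per induction step. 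Your argument is self-contained: it needs only the definition of $c_1$ and the binomial theorem, with no appeal to the recurrence machinery of the predecessor papers, and for this particular statement it is arguably the more transparent. What the paper's inductive template buys is uniformity: the same scheme (base case, recurrence, closing binomial identity) is reused verbatim for Proposition~\ref{cc} in the diagonal case, where the closing identity is the less standard~(\ref{idd1}) rather than Vandermonde, so the paper pays a small cost here to set up a pattern it exploits later. One small point of care in your version, which you handle implicitly: when $n<k$ the sum is empty and $\binom{ak}{n-k}=0$, so the identity holds there as well.
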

 \begin{proof} We use the induction on $k$. From Janji\'c~\cite[Equation (3)]{ja2}, we have
 $c_1(n,1)=f_0(n)$, which means that the statement holds for $k=1$. Suppose that the statement is true for $k-1,(k>1)$. Using the recurrence~\cite[Equation (3)]{ja3}, and the induction hypothesis, we obtain
\[ c_1(n,k)=\sum_{i=1}^{n-k+1}{a\choose i-1}{ak-a\choose n-i-k+1},\]
and the statement follows from the Vandermonde convolution.
 \end{proof}

As a consequence of~\cite[Equation (9), Proposition 7]{ja3}, we obtain the following explicit formulas for  $c_m(n,k)$ and $f_m$.
\begin{corollary}
We have
\begin{equation*}c_m(n,k)=\sum_{i=k}^n(m-1)^{i-k}{i-1\choose k-1}{ia\choose n-i},\end{equation*}
\begin{equation*}f_m(n)=\sum_{i=1}^nm^{i-1}{ia\choose n-i}.\end{equation*}
\end{corollary}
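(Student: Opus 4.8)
The plan is to reduce both identities to Proposition~\ref{pb1} by expressing $f_m$ and $c_m$ through the single-index quantities $c_1(n,i)$ and then substituting the closed form obtained there. Introduce the ordinary generating function $F_0(x)=\sum_{n\ge 1}f_0(n)x^n=\sum_{n\ge 1}\binom{a}{n-1}x^n=x(1+x)^a$. By the definition~\eqref{cmnk}, $c_1(n,i)$ is the coefficient of $x^n$ in $F_0(x)^i$, so Proposition~\ref{pb1} says precisely that $[x^n]F_0(x)^i=\binom{ai}{n-i}$; this is the single input I intend to use at the end.

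Next I would record how iterating the invert transform acts on generating functions. Since $f_m$ is the $m$th invert transform of $f_0$, a short induction on the one-step rule $F\mapsto F/(1-F)$ gives $F_m(x)=F_0(x)/\bigl(1-mF_0(x)\bigr)$, which is the generating-function form behind~\cite[Proposition 7]{ja3}. Expanding the geometric series,
\[F_m(x)=\sum_{i\ge 1}m^{\,i-1}F_0(x)^i=\sum_{i\ge 1}m^{\,i-1}x^i(1+x)^{ai},\]
and extracting the coefficient of $x^n$ via $[x^{n-i}](1+x)^{ai}=\binom{ai}{n-i}$ (the terms with $i>n$ vanishing) yields the asserted formula for $f_m(n)$.

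For $c_m(n,k)$ I would argue in the same way. By~\eqref{cmnk} we have $c_m(n,k)=[x^n]F_{m-1}(x)^k$, and the negative binomial series gives
\[F_{m-1}(x)^k=\frac{F_0(x)^k}{\bigl(1-(m-1)F_0(x)\bigr)^k}=\sum_{j\ge 0}\binom{j+k-1}{k-1}(m-1)^{j}F_0(x)^{\,j+k}.\]
Re-indexing by $i=j+k$ and taking the coefficient of $x^n$ produces the intermediate identity $c_m(n,k)=\sum_{i\ge k}(m-1)^{i-k}\binom{i-1}{k-1}[x^n]F_0(x)^i$, which is exactly~\cite[Equation (9)]{ja3}. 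Substituting $[x^n]F_0(x)^i=\binom{ai}{n-i}$ from Proposition~\ref{pb1} and truncating at $i=n$ gives the stated expression.

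Since Proposition~\ref{pb1} has already absorbed the one convolution (Vandermonde) identity needed, everything here is bookkeeping once the iterated invert transform $F_m=F_0/(1-mF_0)$ is established. I therefore expect the only real obstacle to be that algebraic point, together with keeping the two index shifts $i=j+1$ and $i=j+k$ consistent; the rest is two standard power-series expansions followed by a single appeal to Proposition~\ref{pb1}.
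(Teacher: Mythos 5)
Your proposal is correct and takes essentially the same route as the paper: the paper's entire proof is to substitute Proposition~\ref{pb1} (i.e.\ $c_1(n,i)={ia\choose n-i}$) into the general transfer formulas of \cite[Equation (9), Proposition 7]{ja3}, which is exactly your final step. The only difference is that where the paper cites those formulas, you rederive them from the invert-transform definition via $F_m(x)=F_0(x)/\bigl(1-mF_0(x)\bigr)$ together with the geometric and negative-binomial expansions — a correct, self-contained unpacking of the same argument (and your observation $F_0(x)=x(1+x)^a$ in fact also gives Proposition~\ref{pb1} directly, without the paper's induction).
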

\begin{corollary} In the case $a=2$, the sequence $f_1(1),f_1(2),\ldots$ is seqnum{A002478}, which is the bisection of the Narayana's cows sequence (seqnum{A000930}).
\end{corollary}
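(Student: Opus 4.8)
The plan is to determine the ordinary generating function of the sequence $f_1$ and then read a linear recurrence off its denominator. Since $f_1$ is the first invert transform of $f_0$, its generating function $F_1(x)=\sum_{n\ge 1}f_1(n)x^n$ satisfies $F_1=F_0/(1-F_0)$, where $F_0(x)=\sum_{n\ge 1}f_0(n)x^n$. For $a=2$ we have $f_0(n)=\binom{2}{n-1}$, so $f_0$ is supported on $n\in\{1,2,3\}$ and
\[
F_0(x)=x+2x^2+x^3=x(1+x)^2 .
\]
Substituting and simplifying gives
\[
F_1(x)=\frac{x(1+x)^2}{1-x(1+x)^2}=\frac{x+2x^2+x^3}{1-x-2x^2-x^3}.
\]
The same rational function can be obtained instead from the explicit formula $f_1(n)=\sum_{i=1}^{n}\binom{2i}{n-i}$ of the preceding corollary, which is the route I would use if one prefers to avoid quoting the invert-transform identity.

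From the denominator $1-x-2x^2-x^3$ I read off the linear recurrence
\[
f_1(n)=f_1(n-1)+2f_1(n-2)+f_1(n-3)\qquad(n\ge 4),
\]
and I check the initial values $f_1(1)=1$, $f_1(2)=3$, $f_1(3)=6$ directly, so that the sequence begins $1,3,6,13,28,60,\dots$.

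The remaining, and genuinely substantive, step is to confirm that the bisection $b(n)=N(2n)$ of Narayana's cows sequence $N$, given by $N(n)=N(n-1)+N(n-3)$, satisfies this same recurrence with the same starting values. I would argue through the characteristic roots: since $x^3-x^2-1$ has distinct roots $\alpha,\beta,\gamma$, the term $N(n)$ is a linear combination of $\alpha^n,\beta^n,\gamma^n$, hence $b(n)$ is a linear combination of $(\alpha^2)^n,(\beta^2)^n,(\gamma^2)^n$ and is governed by the polynomial with roots $\alpha^2,\beta^2,\gamma^2$. Using the Vieta relations $\alpha+\beta+\gamma=1$, $\alpha\beta+\beta\gamma+\gamma\alpha=0$, $\alpha\beta\gamma=1$, the elementary symmetric functions of the squares evaluate to $1,-2,1$, so the squared-root polynomial is precisely $x^3-x^2-2x-1$. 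This yields $b(n)=b(n-1)+2b(n-2)+b(n-3)$, matching the denominator found above; comparing the first three terms then identifies $f_1$ with the bisection, i.e.\ with A002478. The main obstacle is this last computation, both in getting the root-squaring right and in aligning the index offset so that $f_1(n)=N(2n)$ rather than $N(2n\pm 1)$.
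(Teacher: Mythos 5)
Your argument is correct, but note that the paper offers no proof of this corollary at all: it is stated as a bare identification with the OEIS entries A002478 and A000930, so your write-up supplies the missing verification rather than paralleling an existing argument. Your route is sound at every step: for $a=2$, $F_0(x)=x+2x^2+x^3=x(1+x)^2$; the invert-transform identity $F_1=F_0/(1-F_0)$ (which matches the paper's setup, since $f_1(n)=\sum_k c_1(n,k)$ and $c_1(n,k)$ is the coefficient of $x^n$ in $F_0(x)^k$) gives $F_1(x)=\frac{x+2x^2+x^3}{1-x-2x^2-x^3}$, hence $f_1(n)=f_1(n-1)+2f_1(n-2)+f_1(n-3)$ with $f_1(1),f_1(2),f_1(3)=1,3,6$, in agreement with the explicit formula $f_1(n)=\sum_{i=1}^n\binom{2i}{n-i}$. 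Your Vieta computation is also right: the elementary symmetric functions of the squared roots of $x^3-x^2-1$ are $e_1^2-2e_2=1$, $e_2^2-2e_1e_3=-2$, $e_3^2=1$, yielding $x^3-x^2-2x-1$, and the distinctness you assume does hold since the discriminant of $x^3-x^2-1$ is $-31\neq 0$. Two remarks on what you flag as the substantive step. First, the root-squaring machinery (and the distinct-roots hypothesis) can be avoided entirely by unfolding the Narayana recurrence twice: from $N(2n)=N(2n-1)+N(2n-3)$ together with $N(2n-1)=N(2n-2)+N(2n-4)$ and $N(2n-3)=N(2n-4)+N(2n-6)$ one gets $N(2n)=N(2n-2)+2N(2n-4)+N(2n-6)$ directly, which is the needed recurrence for the bisection $b(n)=N(2n)$. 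Second, on the offset you worried about: with the OEIS normalization $N(0)=N(1)=N(2)=1$, the alignment is $f_1(n)=N(2n)$ for $n\ge 1$, giving $1,3,6,13,28,60,\ldots$, i.e.\ A002478 with its initial term $a(0)=1$ dropped; equivalently, since $x+2x^2+x^3=1-(1-x-2x^2-x^3)$, one has $F_1(x)=\frac{1}{1-x-2x^2-x^3}-1$, whose coefficients are exactly A002478 shifted by that one leading term. So your proof is complete and, if anything, can be streamlined; it turns an unproved OEIS observation in the paper into a theorem.
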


\section{Diagonals of Pascal triangle}
The following problem which we investigate is:
For a positive integer $a$, find the numbers of words over the alphabet $\{0,1,\ldots,a-1,\ldots\}$ such that subwords from $\{0,1,\ldots,a-1\}$ have no rises. We let $\mathcal P_2$ denote this property.
We show that for this problem, the values of the initial function are  figurate numbers, that is, the numbers on a diagonal of the Pascal triangle.
We let $d(n-1,a)$ denote the number words length $n-1$ satisfying $\mathcal P_2$.
\begin{proposition} The following equation holds:
\begin{equation}\label{fgb}d(n-1,a)={n+a-2\choose a-1}.\end{equation}
\end{proposition}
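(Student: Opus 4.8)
The plan is to identify the words counted by $d(n-1,a)$ explicitly and then count them by a standard multiset argument. Since the length-$(n-1)$ words in question are formed from the letters $\{0,1,\ldots,a-1\}$ only, property $\mathcal P_2$ reduces to the requirement that the word have no rises, i.e.\ that it be weakly decreasing: $w_1\ge w_2\ge\cdots\ge w_{n-1}$ with each $w_i\in\{0,1,\ldots,a-1\}$. Thus I would first argue that a weakly decreasing word is completely determined by how many times each of the $a$ available letters occurs, so that counting these words is the same as counting multisets of size $n-1$ drawn from an $a$-element set.

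Next I would invoke the standard stars-and-bars count. Writing $m_0,m_1,\ldots,m_{a-1}$ for the multiplicities of the letters $0,1,\ldots,a-1$, the map sending a word to its multiplicity vector is a bijection onto the nonnegative integer solutions of $m_0+m_1+\cdots+m_{a-1}=n-1$. The number of such solutions is $\binom{(n-1)+a-1}{a-1}=\binom{n+a-2}{a-1}$, which is exactly the claimed value.

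Alternatively, to stay close to the inductive style used in Proposition~\ref{pb1}, I could set up a recurrence by conditioning on the multiplicity $j$ of the largest available letter $a-1$: deleting all its occurrences leaves a weakly decreasing word of length $n-1-j$ over $\{0,\ldots,a-2\}$, which gives $d(n-1,a)=\sum_{j\ge 0} d(n-1-j,a-1)$. Substituting the formula for $a-1$ and collapsing the sum by the hockey-stick identity then yields $\binom{n+a-2}{a-1}$ directly. The main point to get right is not any hard computation but the bookkeeping at the interpretation step --- pinning down that $\mathcal P_2$ on these base words means exactly weak monotonicity over an $a$-letter alphabet --- after which either route is routine.
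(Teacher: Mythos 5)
Your proof is correct, but it takes a genuinely different route from the paper. You count the words directly: a length-$(n-1)$ word over $\{0,\ldots,a-1\}$ with no rises is exactly a weakly decreasing word, hence is determined by its multiplicity vector, and stars-and-bars immediately gives $\binom{n+a-2}{a-1}$ — a one-step bijection with no induction at all. The paper instead conditions on whether a no-rise word over $\{0,\ldots,a\}$ begins with the top letter $a$, obtaining the two-term recurrence $d(n,a+1)=d(n,a)+d(n-1,a+1)$, telescopes it to the summation recurrence (\ref{ww}), and then inducts on $a$, closing the sum with the horizontal (hockey-stick) recurrence for binomial coefficients. Your alternative argument — deleting all $j$ copies of the largest letter, which in a weakly decreasing word sit at the front — arrives at the same summation recurrence as (\ref{ww}) in a single cleaner step, rather than via telescoping, and then proceeds exactly as the paper does. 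What each approach buys: your bijection makes it transparent \emph{why} figurate numbers (multiset counts) appear here, and is shorter; the paper's recursive derivation stays in the idiom used throughout the rest of the article, where recurrences and the horizontal recurrence for binomial coefficients (generalized later in Lemma 1) do the work, so it sets up the tools reused in the proof of Proposition \ref{cc}. One point you handled correctly and were right to flag: since the base words use only letters from $\{0,\ldots,a-1\}$, property $\mathcal P_2$ does reduce to the whole word being rise-free, i.e.\ weakly decreasing, so the multiset identification is legitimate.
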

\begin{proof} We first have $d(0,a)=1$, since the empty word has no a rise. Assume that $n>1$. The following recurrence  holds:
\begin{equation*}d(n,a+1)=d(n,a)+d(n-1,a+1),(n>0).\end{equation*}
Namely, each word of length $n-1$ over $\{0,1,\ldots,a-1,a\}$ that has no rises may begin with a letter from $\{0,1,\ldots,a-1\}$. The letter $a$ does not appear in a word, which yields that there are $d(n-1,a)$ such words.
There remains the words  of length $n-1$ beginning with $a$. Obviously, there are
$d(n-2,a+1)$ such words.
It follows that
\begin{equation*}\begin{array}{ccc}d(n-1,a+1)-d(n-2,a+1)&=&d(n-1,a),\\
d(n-2,a+1)-d(n-3,a+1)&=&d(n-2,a),\\
&\vdots&\\
d(1,a+1)-d(0,a+1)&=&d(1,a).
\end{array}\end{equation*}
Adding expressions on the left-hand sides and the right-hand sides, we obtain the following recurrence:
\begin{equation}\label{ww}d(n-1,a+1)=\sum_{i=1}^{n}d(i-1,a).\end{equation}
To prove (\ref{fgb}), we use induction on $a$. If $a=1$, then $d(n-1,1)=1$, since the alphabet consists of the empty word.
 Assume that the statement holds for $a\geq 1$. Then (\ref{ww}) takes the form
\begin{equation*}
d(n-1,a+1)=\sum_{i=1}^n{i+a-2\choose a-1},
\end{equation*}
and the statement holds according to the  horizontal recurrence for the binomial coefficients.
\end{proof}
Therefore, in the case $f_0(n)={n+a-2\choose a-1},(n=1,2,\ldots)$,
since $f_0(1)=1$, using Janji\'c~\cite[Proposition 12]{ja3}, we obtain
\begin{corollary}The number $c_m(n,k),(m\geq 1)$ is the number of words of length $n-1$ over the alphabet $\{0,1,\ldots,a-1,\ldots, a+m-1\}$ having $k-1$ letters equal to $a+m-1$ and
    satisfying $\mathcal P_2$.
\end{corollary}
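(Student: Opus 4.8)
The plan is to establish this corollary in exactly the way its $\mathcal{P}_1$-counterpart was obtained, by appealing to the general enumeration result Janji\'c~\cite[Proposition 12]{ja3}; my task then reduces to checking that its two hypotheses are met and to convincing myself that the property $\mathcal{P}_2$ is of the right shape for it to apply. First I would record the word-theoretic meaning of the initial function: the preceding Proposition identifies $f_0(n)=\binom{n+a-2}{a-1}=d(n-1,a)$ as the number of words of length $n-1$ over $\{0,1,\ldots,a-1\}$ satisfying $\mathcal{P}_2$. Second I would verify the normalisation $f_0(1)=\binom{a-1}{a-1}=1$, which expresses that the empty word is the unique length-$0$ word with $\mathcal{P}_2$. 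With these two facts in hand, Proposition 12 returns the stated interpretation of $c_m(n,k)$ directly.

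To be sure the abstract proposition really covers $\mathcal{P}_2$, I would make explicit the block decomposition encoded in the convolution~\eqref{cmnk}. Reading $c_m(n,k)=\sum_{i_1+\cdots+i_k=n}f_{m-1}(i_1)\cdots f_{m-1}(i_k)$, I would interpret each factor $f_{m-1}(i_j)$ as a word of length $i_j-1$ over the shorter alphabet $\{0,1,\ldots,a+m-2\}$ satisfying $\mathcal{P}_2$, and insert the $k-1$ copies of the new largest letter $a+m-1$ as separators between the $k$ (possibly empty) blocks. The bookkeeping then matches: the letter $a+m-1$ occurs exactly $k-1$ times and the lengths total $(n-k)+(k-1)=n-1$, while empty blocks are admissible because $f_{m-1}(1)=1$.

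The single place where $\mathcal{P}_2$ genuinely enters — and hence the main obstacle — is to confirm that this splitting is a bijection onto all $\mathcal{P}_2$-words of length $n-1$ over $\{0,1,\ldots,a+m-1\}$ carrying exactly $k-1$ letters equal to $a+m-1$. Here the choice of separator is essential: since $a+m-1$ lies outside $\{0,1,\ldots,a-1\}$, it can never belong to a maximal factor of small letters, so every occurrence of it breaks such a factor. Consequently the maximal small-letter factors of the whole word are exactly the disjoint union of those of the individual blocks, and the global requirement that none of them has a rise is equivalent to each block separately satisfying $\mathcal{P}_2$. This factorwise independence is precisely what legitimises the product $f_{m-1}(i_1)\cdots f_{m-1}(i_k)$, and it closes the argument.
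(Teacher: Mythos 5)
Your proposal matches the paper's argument exactly: the paper derives this corollary by noting $f_0(n)=\binom{n+a-2}{a-1}$ counts the $\mathcal P_2$-words of length $n-1$, checking $f_0(1)=1$, and citing Janji\'c~\cite[Proposition 12]{ja3}. Your additional unpacking of the block-and-separator bijection behind that proposition is correct but is detail the paper leaves to the cited reference.
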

According to~\cite[Corollary 2]{ja3}, we get
\begin{corollary}The number $f_m(n)$ equals the number of words of length $n-1$ over the alphabet $\{0,1,\ldots,a-1,\ldots, a+m-1\}$ satisfying $\mathcal P_2$.
\end{corollary}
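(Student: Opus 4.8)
The plan is to obtain this as an immediate consequence of the combinatorial interpretation of $c_m(n,k)$ furnished by the preceding corollary, combined with the general relation between the invert transform $f_m$ and the composition sums $c_m$. First I would recall the identity from~\cite[Corollary 2]{ja3}, namely
\begin{equation*}f_m(n)=\sum_{k=1}^{n}c_m(n,k).\end{equation*}
On the level of generating functions this is nothing but the statement that the invert transform of $f_{m-1}$ is $\sum_{k\geq 1}F_{m-1}(x)^k=F_{m-1}(x)/(1-F_{m-1}(x))$, where the coefficient of $x^n$ in $F_{m-1}(x)^k$ is exactly $c_m(n,k)$ by definition~\eqref{cmnk}; the upper limit $n$ reflects that a composition of $n$ into positive parts has at most $n$ parts, so $c_m(n,k)=0$ for $k>n$.

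Next I would invoke the preceding corollary, which identifies $c_m(n,k)$ as the number of words of length $n-1$ over $\{0,1,\ldots,a+m-1\}$ that satisfy $\mathcal P_2$ and contain exactly $k-1$ occurrences of the maximal letter $a+m-1$. The key combinatorial observation is that, as $k$ runs through $1,\ldots,n$, the quantity $k-1$ runs through $0,\ldots,n-1$, that is, through \emph{all} possible numbers of occurrences of the letter $a+m-1$ in a word of length $n-1$. Hence the families counted by $c_m(n,1),\ldots,c_m(n,n)$ are pairwise disjoint, and their union is precisely the set of all words of length $n-1$ over $\{0,1,\ldots,a+m-1\}$ satisfying $\mathcal P_2$, with no constraint on how often the top letter appears. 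Summing over $k$ therefore counts exactly these words, which is the assertion.

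The only point that requires care is the bookkeeping of the summation range: one must note explicitly that a word of length $n-1$ cannot contain more than $n-1$ copies of $a+m-1$, so that the index $k$ never exceeds $n$ and the displayed sum genuinely exhausts the partition by number of top letters. This is immediate, but it is what guarantees that the disjoint union is complete. The substantive ingredient is the identity $f_m=\sum_k c_m(\cdot,k)$ borrowed from~\cite[Corollary 2]{ja3}; once it is in hand, the corollary reduces to the clean disjoint-union argument above, so I do not anticipate any real obstacle beyond stating that bookkeeping carefully.
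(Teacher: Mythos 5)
Your proof is correct and takes essentially the same route as the paper: the paper disposes of this corollary by simply citing \cite[Corollary 2]{ja3}, and your argument --- the identity $f_m(n)=\sum_{k=1}^{n}c_m(n,k)$ combined with the partition of all words of length $n-1$ over $\{0,1,\ldots,a+m-1\}$ satisfying $\mathcal P_2$ according to the number $k-1\in\{0,1,\ldots,n-1\}$ of occurrences of the top letter $a+m-1$, as counted by the preceding corollary --- is exactly the standard reasoning that this citation compresses. Your attention to the summation range (that $c_m(n,k)=0$ for $k>n$, so the disjoint union over $k$ is exhaustive) is sound and closes the only bookkeeping point.
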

To derive an explicit formula for $c_1(n,k)$, we need the following lemma.
\begin{lemma} Let $u\geq v\geq w\geq 1$ be integers. Then
\begin{equation}\label{idd1}{u\choose
v}=\sum_{i=w}^{u-v+w}{i-1\choose w-1}{u-i\choose v-w}.\end{equation}
\end{lemma}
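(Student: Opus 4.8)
The plan is to give a bijective proof, reading both sides as counts of $v$-element subsets of $\{1,2,\ldots,u\}$. The left-hand side $\binom{u}{v}$ is exactly the number of such subsets, so all the work lies in showing that the right-hand side partitions these subsets according to a natural statistic. The statistic I would use is the position of the $w$-th smallest chosen element.

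First I would fix a $v$-subset $S\subseteq\{1,\ldots,u\}$ and single out its $w$-th smallest element, call it $i$; since $|S|=v\geq w$, this is well defined. By construction $S$ has exactly $w-1$ elements strictly below $i$ and exactly $v-w$ elements strictly above $i$. Conditioning on the value of $i$, the lower block is an arbitrary $(w-1)$-subset of $\{1,\ldots,i-1\}$, contributing the factor $\binom{i-1}{w-1}$, while the upper block is an arbitrary $(v-w)$-subset of $\{i+1,\ldots,u\}$, a set of size $u-i$, contributing $\binom{u-i}{v-w}$. Multiplying these and summing over all admissible $i$ reconstructs every $v$-subset exactly once, which is the right-hand side of~(\ref{idd1}).

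The step that needs care is the range of the summation index, and this is the only real obstacle. Requiring $w-1$ elements strictly below $i$ forces $i\geq w$, and requiring $v-w$ elements strictly above $i$ forces $u-i\geq v-w$, i.e.\ $i\leq u-v+w$; these are precisely the stated bounds, so no subset is missed or double-counted. Outside this interval one of the two binomial factors vanishes anyway, so the restriction is harmless, and the hypotheses $u\geq v\geq w\geq 1$ guarantee the interval is nonempty and all coefficients are the ordinary nonnegative ones.

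As an alternative route, and as an independent check, I would reduce the sum to a standard Chu--Vandermonde convolution. Setting $j=i-w$ and writing $\binom{i-1}{w-1}=\binom{j+(w-1)}{w-1}$ together with $\binom{u-i}{v-w}=\binom{(u-v-j)+(v-w)}{v-w}$, the identity becomes the familiar $\sum_{j=0}^{m}\binom{j+a}{a}\binom{(m-j)+b}{b}=\binom{m+a+b+1}{a+b+1}$ with $a=w-1$, $b=v-w$, and $m=u-v$; substituting back gives $\binom{u}{v}$ on the right. In this version the difficulty is purely bookkeeping, namely matching the shifted indices to the canonical form of the convolution, rather than anything conceptual, so the combinatorial argument is the cleaner presentation.
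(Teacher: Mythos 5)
Your proposal is correct and is essentially the paper's own argument: the paper counts binary words of length $u$ with $v$ zeros and conditions on the position $i$ of the $w$th zero, which is exactly your conditioning on the $w$th smallest element of a $v$-subset (positions of zeros correspond bijectively to subsets), with the same index range $w\leq i\leq u-v+w$. Your supplementary Chu--Vandermonde reduction is a valid independent check but does not change the substance of the proof.
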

\begin{proof} We know that ${u\choose v}$ is the number of binary words of length $u$ with $v$ zeros. We let $i$ denote the position of the $w$th zero in a word.
It is clear that $w\leq i\leq u-v+w$. For a fixed $i$, the number of words is ${i-1\choose w-1}{u-i\choose v-w}$. Summing over all $i$, we obtain (\ref{idd1}).
\end{proof}
\begin{note}The identity (\ref{idd1}) generalizes the horizontal recurrence for the binomial coefficients, which we obtain for either $w=1$ or $w=v$. \end{note}

Next, we derive a formula for $c_1(n,k)$.
\begin{proposition}\label{cc} We have
\[c_1(n,k)={n+ak-k-1\choose ak-1}.\]
\end{proposition}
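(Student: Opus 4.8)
The plan is to argue by induction on $k$, exactly paralleling the proof of Proposition~\ref{pb1} but using the figurate-number initial values $f_0(n)=\binom{n+a-2}{a-1}$ in place of the row values. The base case $k=1$ is immediate: by \cite[Equation (3)]{ja2} we have $c_1(n,1)=f_0(n)=\binom{n+a-2}{a-1}$, and the claimed formula at $k=1$ reads $\binom{n+a-2}{a-1}$, so the two agree. For the inductive step I assume the formula for $k-1$, namely $c_1(m,k-1)=\binom{m+ak-a-k}{ak-a-1}$, and feed it into the convolution recurrence \cite[Equation (3)]{ja3},
\[
c_1(n,k)=\sum_{i=1}^{n-k+1}f_0(i)\,c_1(n-i,k-1)
=\sum_{i=1}^{n-k+1}\binom{i+a-2}{a-1}\binom{n-i+ak-a-k}{ak-a-1}.
\]

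The decisive move is then a reindexing that brings this sum into the exact shape of Lemma~\ref{idd1}, rather than of the plain Vandermonde convolution used before. Setting $j=i+a-1$, the first factor becomes $\binom{j-1}{a-1}$, and writing $u=n+ak-k-1$ one checks that the argument of the second factor satisfies $n-i+ak-a-k=u-j$, so the second factor becomes $\binom{u-j}{ak-a-1}$. As $i$ runs from $1$ to $n-k+1$, the new index $j$ runs from $a$ to $n-k+a$. Hence
\[
c_1(n,k)=\sum_{j=a}^{n-k+a}\binom{j-1}{a-1}\binom{u-j}{ak-a-1}.
\]

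Now I apply Lemma~\ref{idd1} with the parameter choice $u=n+ak-k-1$, $v=ak-1$, and $w=a$: the weights match since $w-1=a-1$ and $v-w=ak-a-1$, and the summation limits match since $u-v+w=(n+ak-k-1)-(ak-1)+a=n-k+a$. The lemma's hypotheses $u\ge v\ge w\ge 1$ hold here because $n\ge k$ (otherwise $c_1(n,k)=0$), because $ak-1\ge a$ when $k\ge 2$, and because $a\ge 1$. The lemma therefore collapses the sum to $\binom{u}{v}=\binom{n+ak-k-1}{ak-1}$, completing the induction. I expect the only real obstacle to be spotting the correct substitution $j=i+a-1$ and verifying that the shifted summation range lines up precisely with the range $w\le i\le u-v+w$ dictated by the lemma; once the parameters $(u,v,w)=(n+ak-k-1,\,ak-1,\,a)$ are identified, the rest is bookkeeping.
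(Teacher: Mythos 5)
Your proof is correct and takes essentially the same route as the paper's: induction on $k$ via the convolution recurrence, with the resulting sum collapsed by the lemma giving identity \eqref{idd1} at the parameters $(u,v,w)=(n+ak-k-1,\,ak-1,\,a)$. The only cosmetic difference is that you reindex the sum (setting $j=i+a-1$) to fit the lemma as stated, whereas the paper shifts the lemma instead (replacing $w$ by $w+1$ and then taking $w=a-1$) to fit the sum --- the same substitution viewed from opposite sides.
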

\begin{proof}
We use induction on $k$. From~\cite[Equation (3)]{ja3}, we have
\begin{equation*}c_1(n,1)=f_0(n)={n+a-2\choose a-1},\end{equation*} which means that the statement is true for $k=1$
  Using induction, we  conclude that the  statement  is equivalent to the following
  binomial identity:
\[{n+ak-k-1\choose ak-1}=\sum_{i=1}^{n-k+1}{i+a-2\choose a-1}{n+ak-k-a-i\choose ak-a-1}.\]
We prove that this identity follows from Identity \ref{idd1}. Namely, taking $w+1$ instead
of $w$ in Identity \ref{idd1}, and then replacing $i-1$ by $j$ yields
\[{u\choose v}=\sum_{j=w}^{u-v+w}{j\choose w}{u-j-1\choose v-w-1}.\]
Taking, in particular,  $w=a-1$ and replacing $j$ by $i+a-2$ implies
\[{u\choose v}=\sum_{i=1}^{u-v+1}{i+a-2\choose a-1}{u-1-i\choose v-a}.\]
Finally, taking $u=n+ak-k-1,v=ak-1$, we obtain the desired result.
\end{proof}
\begin{corollary}
\begin{enumerate} The following formulas  holds:
\item \begin{equation*}c_m(n,k)=\sum_{i=k}^n(m-1)^{i-k}{i-1\choose k-1}{n+ai-i-1\choose ai-1}.\end{equation*}
\item \begin{equation*}f_m(n)=\sum_{k=1}^nm^{i-1}{n+ai-i-1\choose ai-1}.\end{equation*}
\end{enumerate}
\end{corollary}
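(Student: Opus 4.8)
The plan is to obtain both formulas by the same route already used for the rows of the Pascal triangle, namely by specialising the general identities \cite[Equation (9), Proposition 7]{ja3} to the present initial function. Those identities express $c_m$ and $f_m$ through the single function $c_1$, independently of the particular choice of $f_0$: they state that
\[c_m(n,k)=\sum_{i=k}^n(m-1)^{i-k}\binom{i-1}{k-1}c_1(n,i),\qquad f_m(n)=\sum_{i=1}^n m^{i-1}c_1(n,i).\]
The truncation of the sums at $i=n$ is automatic, since $c_1(n,i)=0$ once $i>n$: a positive integer $n$ cannot be written as an ordered sum of more than $n$ positive parts.

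Then I would simply substitute the closed form for $c_1$ established in Proposition~\ref{cc}, namely $c_1(n,i)=\binom{n+ai-i-1}{ai-1}$, into both displays. This turns them into exactly the two asserted formulas (the summation index in the second formula being $i$, matching the exponent $i-1$ of $m$), which completes the argument in a single step.

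Because the excerpt permits me to assume \cite[Equation (9), Proposition 7]{ja3}, there is no genuine obstacle here; the content is a direct specialisation. If instead I had to prove the two general identities from scratch, the work would be generating-functional: writing $F_j(x)=\sum_{n\ge 1}f_j(n)x^n$, the $k$-fold convolution defining $c_m$ gives $\sum_n c_m(n,k)x^n=F_{m-1}(x)^k$, while the iterated invert transform satisfies $1/F_m=1/F_0-m$, hence $F_{m-1}=F_0/(1-(m-1)F_0)$. Expanding $\bigl(F_0/(1-(m-1)F_0)\bigr)^k$ by the negative binomial series $\sum_{i\ge k}\binom{i-1}{k-1}t^{i-k}=(1-t)^{-k}$ and reading off the coefficients of $F_0(x)^i=\sum_n c_1(n,i)x^n$ would reproduce the $c_m$ identity, and the case $k=1$ (a plain geometric series) would give the $f_m$ identity. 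That algebraic expansion, rather than the final substitution, would be the only delicate point.
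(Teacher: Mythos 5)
Your proposal is correct and coincides with the paper's own proof, which is exactly the one-line specialisation of \cite[Equation (9), Proposition 7]{ja3} combined with the closed form of $c_1$ from Proposition~\ref{cc}; you even rightly read the summation index in the second formula as $i$ (a typo in the statement). The generating-function sketch you add is sound but unnecessary here, since the paper takes those general identities as given.
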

\begin{proof}
The proof follows from~\cite[Equation (9), Proposition 7]{ja3}.
\end{proof}
We state some particular cases. Note that the case $a=2$ was considered  in~\cite[Example 31]{ja3}.
\begin{example} In the case $a=3$, we have $f_0(n)={n+1\choose 2},(n\geq 1)$.
Hence, $f_0(n)$ is the $n$th triangular number. We thus obtain
\begin{enumerate}
\item
 The number ${n+2k-1\choose 3k-1}$ is the number of ternary words of length $n-1$  having $k-1$ letters equal to $2$, and avoiding $01,02,12$.
\item
 The number \begin{equation*}\sum_{i=k}^n(m-1)^{i-k}{i-1\choose k-1}{n+2i-1\choose 3i-1}\end{equation*}
 is the number of words of length $n-1$ over the alphabet $\{0,1,\ldots,m+2\}$ having $k-1$ letters equal to $m+2$ and avoiding $01,02,12$.
\item
The number
\begin{equation*}\sum_{i=1}^nm^{i-1}{n+2i-1\choose 3i-1}\end{equation*}
 is the number of words of length $n-1$ over the alphabet $\{0,1,\ldots,m+2\}$ avoiding $01,02,12$.
\end{enumerate}
\end{example}

This case is also related with the enumeration of some Dyck paths.
Namely, the sequence $f_0(1),f_0(2),\ldots$ makes
the second column of the Narayana triangle. It follows that $f_0(n)$ is the
number of Dyck paths of semilength $n+1$ having exactly two peaks.
In the  formula
\begin{equation*}c_1(n,k)=\sum_{i_1+i_2+\cdots+i_k=n}f_0(i_1)\cdot f_0(i_2)\cdots f_0(i_k),\end{equation*}
the product $f_0(i_1)\cdot f_0(i_2)\cdots f_0(i_k)$ equals the number of Dyck paths
of semilength $n+k$ obtained by the concatenation of $k$ Dyck paths with exactly two peaks.
We thus obtain the following Euler-type identity.
\begin{identity}
The following sets have the same number of elements
\begin{enumerate}
\item
The set of quaternary words of length $n-1$  in
which $k-1$ letters equal $3$ and which avoid $01,02,12$.
\item The set  Dyck paths of
semilength $n+k$ obtained by the concatenation of $k$ Dyck paths with exactly two peaks.
\end{enumerate}
\end{identity}
\section{Central binomial coefficients ant its adjacent neighbors }
In two concluding examples, we give combinatorial properties  for $c_1(n,k)$ only.

We start with a slight generalization of~\cite[Proposition 12]{ja3}.
Let $W$ be a set of words with a property $\mathcal P$, over a finite alphabet $\alpha$.
Assume that the empty word has the property $\mathcal P$.
For a positive integer $i$,  we denote by $W_{i-1}$ the set of words of length $l(i-1)$.
      In particular, $W_0=\emptyset$ yields $l(0)=0$. We let $f_0(i)$ denote the number of words from $W_{i-1}$. In particular, we have $f_0(1)=1$.
   Consider the equation $i_1+i_2+\cdots+i_k=n, (i_t>0,t=1,2,\ldots,k)$. For $x\not\in \alpha$, we want to count words from the alphabet $\alpha\cup\{x\}$ of the form:
   \begin{equation*}w_{i_1-1},x,w_{i_2-1},x,\ldots w_{i_{k-1}-1},x, w_{i_k-1},\end{equation*}
where $w_{i_t-1}\in W_{i_t-1},(t=1,2,\ldots,k)$. We let $N$ denote the number of such words. For fixed $i_1,i_2,\ldots,i_k$,
the word has length \begin{equation*}l(i_1-1)+\cdots+l(i_k-1)+k-1,\end{equation*} and its $k-1$ letters equal to $x$. Choosing suitable $i_1,i_2,\ldots,i_k$, each of $k-1$  letters $x$ may be put at any prescribed place in a word. Summing over all $i_1,i_2,\ldots,i_k$, implies
\begin{proposition}\label{alf} We have
\begin{equation*}N=\sum_{i_1+i_2+\cdots+i_k=n}f_0(i_1)\cdots f_0(i_k).
\end{equation*}
\end{proposition}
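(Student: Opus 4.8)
The plan is to prove the formula by a direct double-counting (bijective) argument, making precise the informal reasoning that precedes the statement. I would regard a word of the prescribed form as assembled from two pieces of data: a composition $i_1+i_2+\cdots+i_k=n$ into positive parts, and, for each part $i_t$, a choice of block $w_{i_t-1}\in W_{i_t-1}$. The goal is to show that this assembly is a bijection onto the set of words counted by $N$, so that the product rule, summed over compositions, produces the claimed formula.

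First I would fix a single composition $(i_1,\ldots,i_k)$ of $n$. With the composition fixed, the $k-1$ separating letters $x$ sit in completely determined positions, and the $t$-th block $w_{i_t-1}$ ranges independently over the $f_0(i_t)$ elements of $W_{i_t-1}$. Hence the number of words of the prescribed form arising from this one composition is the product $f_0(i_1)f_0(i_2)\cdots f_0(i_k)$, by the multiplication principle.

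Next I would argue that distinct compositions (together with their block choices) never produce the same word, so that these contributions may simply be added. This is where the hypothesis $x\notin\alpha$ does the work: in any word of the prescribed form the occurrences of $x$ are recognizable, there are exactly $k-1$ of them, and deleting them cuts the word into $k$ consecutive blocks over $\alpha$. The length of the $t$-th block equals $l(i_t-1)$, and since the families $W_0,W_1,\ldots$ consist of words of distinct lengths, each block lies in exactly one $W_{i_t-1}$; this recovers the part $i_t$ and the chosen block $w_{i_t-1}$ uniquely. Summing the per-composition counts over all compositions $i_1+\cdots+i_k=n$ then yields $N=\sum f_0(i_1)\cdots f_0(i_k)$.

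I expect the only genuinely delicate point to be this uniqueness of the block decomposition, that is, verifying that the assembly map is injective and not merely surjective; everything else is the product rule followed by a sum. The injectivity rests on two assumptions worth making explicit: that $x$ is a fresh letter ($x\notin\alpha$), so a separator can never be confused with block content, and that the length function $l$ is such that the sets $W_{i-1}$ are pairwise disjoint (in the applications $l(i-1)=i-1$, so a block of a given length determines its index $i_t$ outright). Once these are granted, no induction or generating-function computation is required.
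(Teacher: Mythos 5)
Your proof is correct and takes essentially the same route as the paper, whose argument is the informal counting in the paragraph preceding the proposition: fix a composition $i_1+\cdots+i_k=n$, apply the product rule to get $f_0(i_1)\cdots f_0(i_k)$, and sum over all compositions. The one thing you add --- spelling out why distinct compositions yield distinct words, via $x\notin\alpha$ and the requirement that $l$ be injective so a block's length recovers its part $i_t$ --- makes explicit a hypothesis the paper leaves tacit (and which does hold in all its applications, where $l$ is strictly increasing), so it is a refinement of the paper's argument rather than a different one.
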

\begin{remark}
Taking, in particular,  $l(i-1)=i-1$ for all $i$, we obtain~\cite[Proposition 12]{ja3}.
\end{remark}
In the following two cases, we restricted our investigation to find explicit formulas for $c_1(n,k)$ and its combinatorial interpretations.
We first assume  that the values of $f_0$ are the central
binomial coefficients, that is
\begin{equation*}f_0(n)={2n-2\choose n-1}.\end{equation*}
It is clear that $f_0(n)$ is the number of binary words of length $2n-2$, in which the number of zeros equals the number of ones. We let $\mathcal P_3$ denote this condition.
In this case, we take $l(n-1)=2n-2$.
Since the empty word satisfies $\mathcal P_3$,  we have $l(0)=0$. Hence, Proposition \ref{alf} may be applied to obtain
\begin{corollary} The number $c_1(n,k)$ is the number of ternary words of length $2n-k-1$ having $k-1$ letters equal $2$, and all binary subwords satisfy $\mathcal P_3$.\end{corollary}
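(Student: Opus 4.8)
The plan is to derive the statement as a direct instance of Proposition~\ref{alf}, taking the alphabet $\alpha=\{0,1\}$ and the adjoined letter $x=2$. First I would verify that the hypotheses hold for the choice $l(n-1)=2n-2$: here $W_{n-1}$ is the set of binary words of length $2n-2$ satisfying $\mathcal P_3$, so $f_0(n)={2n-2\choose n-1}$ counts them, and the empty word gives $l(0)=0$ together with $f_0(1)=1$. Since the sum defining $c_1(n,k)$ in~(\ref{cmnk}) is identical to the sum defining $N$ in Proposition~\ref{alf}, we obtain $c_1(n,k)=N$ immediately.

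Next I would read off what the words counted by $N$ look like in this setting. They have the shape $w_{i_1-1},2,w_{i_2-1},2,\ldots,2,w_{i_k-1}$, where each binary block $w_{i_t-1}$ has length $2i_t-2$ and satisfies $\mathcal P_3$, and the $k-1$ copies of $2$ act as separators. Equivalently, these are the ternary words whose $k-1$ letters equal to $2$ cut the word into $k$ maximal binary blocks, each having equally many zeros and ones.

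The only remaining computation is the total length, obtained by summing the block lengths and the separators: $\sum_{t=1}^{k}(2i_t-2)+(k-1)=2n-2k+k-1=2n-k-1$, using $\sum_t i_t=n$. This yields exactly the asserted count. I do not expect a genuine obstacle, since all the combinatorial work is already packaged into Proposition~\ref{alf}; the one point deserving care is matching the length function $l(n-1)=2n-2$ to the central binomial coefficients, and confirming that the phrase ``all binary subwords satisfy $\mathcal P_3$'' refers precisely to these $k$ maximal blocks cut out by the letters $2$.
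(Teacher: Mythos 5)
Your proposal is correct and matches the paper's own argument: the paper likewise obtains this corollary as a direct application of Proposition~\ref{alf} with $l(n-1)=2n-2$, the letter $2$ playing the role of $x$, and the empty word giving $l(0)=0$. Your explicit length computation $\sum_{t=1}^{k}(2i_t-2)+(k-1)=2n-k-1$ is exactly the verification the paper leaves implicit, so there is nothing to add.
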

We derive an explicit formula for $c_1(n,k)$.
\begin{proposition}
We have
\begin{gather*}c_1(n,n)=1,\\c_1(n,k)=\frac{2^{n-k}k(k+2)\cdots[k+2(n-k-1)]}{(n-k)!},(k<n).
\end{gather*}
\end{proposition}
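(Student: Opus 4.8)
The plan is to pass to the ordinary generating function of $f_0$ and read off $c_1(n,k)$ as a single coefficient. Set $F(x)=\sum_{n\ge 1}f_0(n)x^n=\sum_{n\ge 1}\binom{2n-2}{n-1}x^n$. Factoring out one power of $x$ and re-indexing with $m=n-1$, the classical evaluation $\sum_{m\ge 0}\binom{2m}{m}x^m=(1-4x)^{-1/2}$ gives $F(x)=x(1-4x)^{-1/2}$.

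The key observation is that the defining sum~(\ref{cmnk}) for $c_1(n,k)$ is exactly the $k$-fold Cauchy convolution of $f_0$ with itself, so that $\sum_{n}c_1(n,k)x^n=F(x)^k=x^k(1-4x)^{-k/2}$. Hence $c_1(n,k)=[x^{n-k}](1-4x)^{-k/2}$, which already forces $c_1(n,k)=0$ for $k>n$ and, for $k=n$, isolates the constant term $1$, accounting for the first displayed value.

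It then remains to expand $(1-4x)^{-k/2}$ by the generalized binomial theorem, $(1-4x)^{-k/2}=\sum_{m\ge 0}\binom{-k/2}{m}(-4)^m x^m$, and to simplify the coefficient at $m=n-k$. Writing each factor of the falling factorial as $-k/2-j=-\tfrac12(k+2j)$ and combining the resulting $(-1)^m2^{-m}$ with $(-4)^m=(-1)^m2^{2m}$ collapses the sign and leaves $\binom{-k/2}{m}(-4)^m=\frac{2^m}{m!}\prod_{j=0}^{m-1}(k+2j)$. Setting $m=n-k$ reproduces the claimed formula $\frac{2^{n-k}}{(n-k)!}\,k(k+2)\cdots[k+2(n-k-1)]$. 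The only place demanding care is this bookkeeping of signs and powers of two in the generalized binomial coefficient; once that is settled the result is immediate, so I would also sanity-check the outcome at $k=1$, where the product of odd numbers $1\cdot 3\cdots(2n-3)=(2n-2)!/(2^{n-1}(n-1)!)$ recovers $c_1(n,1)=\binom{2n-2}{n-1}=f_0(n)$, as it must.

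Staying closer to the inductive style of the earlier propositions, one could instead argue by induction on $k$ through the recurrence $c_1(n,k)=\sum_i f_0(i)\,c_1(n-i,k-1)$, which reduces the claim to a single hypergeometric-type binomial identity. The generating-function route, however, sidesteps that identity entirely and makes the role of the central binomial coefficients transparent, so I would present it as the main proof.
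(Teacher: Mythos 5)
Your proposal is correct and follows essentially the same route as the paper: the paper likewise observes that $c_1(n,k)$ is the coefficient of $x^n$ in $\left[xg(x)\right]^k$ with $g(x)=(1-4x)^{-1/2}$ and expands by the binomial series. You merely supply the sign-and-power-of-two bookkeeping in $\binom{-k/2}{m}(-4)^m=\frac{2^m}{m!}\prod_{j=0}^{m-1}(k+2j)$ that the paper leaves implicit, together with a useful sanity check at $k=1$.
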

\begin{proof}
It is well-known that the generating function $g(x)$, for the sequence \begin{equation}\left\{{2n-2\choose n-1}:n=1,2,\ldots\right\},\end{equation} is $g(x)=\frac{1}{\sqrt{1-4x}}$. We know that $c_1(n,k)$ is the coefficient of $x^n$ in the expansion of $[xg(x)]^k$ into powers of $x$.
The formula follows by the use of Taylor expansion for the binomial series.
\end{proof}
The fact that $c_1(n,1)=f_0(n)$ leads to the following identity:
\begin{identity} For each $n\geq 1$, we have
\begin{equation*}\prod_{i=1}^n(n+i)=2^{n}(2n-1)!!.
\end{equation*}
\end{identity}
We also consider the particular case $k=2$. Then $c_1(n,2)=4^{n-2},(n>1)$. Hence, powers of $4$  have the following property.
\begin{corollary} For $n\geq 2$, the number $4^{n-2}$ is the number of ternary words of length $2n-3$ in which one letter is $2$ and in each binary subword, the number of ones and zeros are equal.
\end{corollary}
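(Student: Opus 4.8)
The plan is to specialize the explicit formula from the preceding proposition to $k=2$, carry out the resulting factorial cancellation to obtain the value $4^{n-2}$, and then read off the combinatorial meaning directly from the corollary that interprets $c_1(n,k)$ as a count of ternary words. Since all the machinery is already in place, the argument reduces to one algebraic simplification together with one substitution.

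First I would set $k=2$ in the formula $c_1(n,k)=\frac{2^{n-k}k(k+2)\cdots[k+2(n-k-1)]}{(n-k)!}$, which is valid for $k<n$, i.e.\ here for $n\geq 2$. The product in the numerator becomes $2\cdot 4\cdots (2n-4)$, the product of the first $n-2$ even numbers (the count of factors being $n-2$ is the one bookkeeping detail to verify, since the last factor is $2+2(n-k-1)=2n-4$). Factoring a $2$ out of each of these $n-2$ factors gives $2^{n-2}(n-2)!$, which cancels the denominator $(n-2)!$ and combines with the leading factor $2^{n-k}=2^{n-2}$ to yield $2^{n-2}\cdot 2^{n-2}=4^{n-2}$.

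To finish, I would invoke the earlier corollary stating that $c_1(n,k)$ is the number of ternary words of length $2n-k-1$ having $k-1$ letters equal to $2$ and all of whose binary subwords satisfy $\mathcal P_3$. Substituting $k=2$, the length becomes $2n-3$, exactly one letter equals $2$, and every binary subword has equally many zeros and ones; this is precisely the family described in the statement. Together with the computed value $c_1(n,2)=4^{n-2}$, the corollary follows.

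I do not expect any genuine obstacle: the sole point requiring care is correctly counting the $n-2$ factors in the even product and performing the cancellation, which is routine. The combinatorial half is immediate once the interpretive corollary is applied with $k=2$.
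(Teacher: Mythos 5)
Your proposal is correct and takes essentially the same route as the paper, which likewise obtains this corollary simply by setting $k=2$ in the explicit formula $c_1(n,k)=\frac{2^{n-k}k(k+2)\cdots[k+2(n-k-1)]}{(n-k)!}$ and invoking the earlier corollary interpreting $c_1(n,k)$ as counting ternary words of length $2n-k-1$ with $k-1$ letters equal to $2$; your cancellation $2^{n-2}\cdot 2^{n-2}(n-2)!\,/\,(n-2)!=4^{n-2}$ is exactly the computation the paper leaves implicit. One trifling bookkeeping point: the product formula is stated for $k<n$, which with $k=2$ means $n\geq 3$ rather than $n\geq 2$, but the boundary case $n=2$ is covered by $c_1(n,n)=1=4^{0}$ (equivalently, by the empty-product convention), so the statement holds in the full range claimed.
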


The number $c_1(n,k)$ may be interpreted in terms of lattice paths.
Namely, It is a well-known fact that $f_0(n)$ is the number of lattice paths from $(0,0)$ to $(n-1,n-1)$ using the steps $(0,1)$ and $(1,0)$. We may consider the symbol $x$ in Proposition \ref{alf} as the $(1,1)$-step possible only on the main diagonal.
We thus obtain the following Euler-type identity.
\begin{identity} The following sets have the same number of elements.
\begin{enumerate}
\item The set of ternary words  having $k-1$ letters equal $2$, in which each binary subword has the same number of zeros and ones.
\item The set of the lattice paths from $(0,0)$ to $(n+k-2,n+k-2)$ using steps $(0,1),(1,0)$ and $k-1$ steps $(1,1)$ possible only on  the main diagonal.
\end{enumerate}
\end{identity}

Our final example is the case $f_0(n)={2n-1\choose n},(n=1,2,\ldots)$. Combinatorially, $f_0(n)$ is the number of binary words of length $2n-1$ in which the number of ones is by $1$ greater than the number of zeros. We  let $\mathcal P_4$ denote this property.
Obviously, the empty word does not satisfy this condition, so that, Proposition \ref{alf} can not be applied.

 We  count the number of words of the form
$w_{i_1},2,w_{i_2},2,\ldots w_{i_{k-1}},2,w_{i_k}$ when $i_1+i_2+\cdots+i_k=n$, and $l(w_i)=2i-1$, for $i\in\{i_1,i_2,\ldots,i_k\}$.
It is easy to see that the following result holds:
\begin{corollary} The number $c_1(n,k)$ is number of ternary words of length $n+k-1$ having the following properties:
\begin{enumerate}
\item No word either begins or ends with $2$.
\item No two $2$ can be adjacent.
\item Each binary subword satisfies $\mathcal P_4$.
 \end{enumerate}
\end{corollary}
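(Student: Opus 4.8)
The plan is to establish a length-preserving bijection between two collections: on the one hand, the data consisting of a composition $i_1+i_2+\cdots+i_k=n$ (with each $i_t>0$) together with a choice of binary words $w_{i_1},\ldots,w_{i_k}$, where each $w_{i_t}$ satisfies $\mathcal P_4$ and hence has length $2i_t-1$; on the other hand, the ternary words described by properties (1)--(3). Since $f_0(i)=\binom{2i-1}{i}$ is exactly the number of binary words of length $2i-1$ satisfying $\mathcal P_4$, the number of such data sets for a fixed composition is $f_0(i_1)\cdots f_0(i_k)$, and summing over all compositions yields $\sum_{i_1+\cdots+i_k=n}f_0(i_1)\cdots f_0(i_k)=c_1(n,k)$ by the defining formula for $c_1$. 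Thus it suffices to count the ternary words via this bijection.

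First I would define the forward map: given the data above, form the concatenation $w_{i_1},2,w_{i_2},2,\ldots,2,w_{i_k}$. Because each block $w_{i_t}$ is a $\mathcal P_4$-word, it is nonempty (its length $2i_t-1\geq 1$), so the resulting word neither begins nor ends with $2$ and contains no two adjacent $2$'s; this gives properties (1) and (2). Property (3) holds since the maximal binary runs of the word are precisely the blocks $w_{i_t}$, each chosen to satisfy $\mathcal P_4$. A routine length count, adding the $k$ block lengths and the $k-1$ separators, gives $\sum_{t}(2i_t-1)+(k-1)=2n-1$.

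Next I would invert the map. Given a ternary word with properties (1)--(3), its $k-1$ letters equal to $2$ are, by properties (1) and (2), each flanked on both sides by binary letters; deleting them therefore splits the word into exactly $k$ nonempty maximal binary runs $w_{i_1},\ldots,w_{i_k}$ read from left to right. By property (3) each run satisfies $\mathcal P_4$, so its length is odd, say $2i_t-1$, which recovers a positive integer $i_t$ uniquely; the length count then forces $i_1+\cdots+i_k=n$. This is a two-sided inverse of the forward map, so the two collections are in bijection and the number of ternary words equals $c_1(n,k)$.

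The main point requiring care is the interplay of the three conditions in the absence of the usual empty-word hypothesis: unlike the central-coefficient case, the empty word does not satisfy $\mathcal P_4$, so Proposition \ref{alf} is not directly available. Properties (1) and (2) are exactly what guarantees that each of the $k$ blocks is nonempty, and hence that the block decomposition of a word is well defined and reconstructs the composition unambiguously; property (3) then pins each block down as a $\mathcal P_4$-word. Confirming that these three conditions taken together exclude every degenerate configuration -- a leading, trailing, or doubled $2$ that would correspond to an empty block -- is the only step that is not purely mechanical.
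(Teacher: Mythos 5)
Your bijection is, in substance, exactly the argument the paper intends: the paper's entire ``proof'' is the single sentence describing words of the form $w_{i_1},2,w_{i_2},2,\ldots,2,w_{i_k}$ with $i_1+i_2+\cdots+i_k=n$ and $l(w_{i_t})=2i_t-1$, followed by ``it is easy to see that the following result holds.'' You have simply written out the forward and inverse maps, and your closing observation --- that properties (1) and (2) are what compensate for the failure of the empty-word hypothesis, so that Proposition \ref{alf} cannot be invoked and the block decomposition must be checked by hand --- is precisely the point the paper glosses over. As far as the combinatorial content goes, the proposal is correct and takes the same route.

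However, there is one discrepancy you computed correctly but then passed over in silence, and it means your write-up does not literally prove the statement as printed: your length count gives $\sum_{t=1}^{k}(2i_t-1)+(k-1)=2n-1$, whereas the corollary asserts the ternary words have length $n+k-1$; these agree only when $k=n$. The stated length is in fact an error in the paper. To see this, take $k=1$: the corollary would then claim that $c_1(n,1)=f_0(n)=\binom{2n-1}{n}$ counts binary words of length $n$ satisfying $\mathcal P_4$, but by definition the $\mathcal P_4$-words counted by $f_0(n)$ have length $2n-1$. The value $n+k-1$ is what one would get with blocks of length $i_t$ (the convention $l(i-1)=i-1$ of the earlier sections), so it is presumably a slip carried over from there; the paper's own construction, like yours, yields length $2n-1$. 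Your bijection therefore proves the corrected statement, but a careful write-up should have flagged that the length you derived contradicts the length you set out to establish, rather than asserting the statement follows.
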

It is known that a generating function for the sequence $f_0(1),f_0(2),\ldots$ is
\begin{equation*}g(x)=\frac{1}{2x\sqrt{1-4x}}-\frac{1}{2x}.\end{equation*}
To obtain an explicit formula for $c_1(n,k)$, we have to expand $[xg(x)]^k$
into powers of $x$.
Using the binomial formula and the expansion  of the binomial series, we obtain
\begin{equation}\label{hehe}
[xg(x)]^k=\frac{1}{2^k}\sum_{j=0}^\infty\left(\sum_{i=0}^k(-1)^{k-i}{k\choose i}i(i+2)\cdots(i+2j-2)\right)\frac{2^j}{j!}x^{j}.
\end{equation}
Since the least power of $x$ on the left-hand side is $k$, we obtain the following:
\begin{identity}
\begin{equation*}
\sum_{i=0}^k(-1)^{k-i}{k\choose i}i(i+2)\cdots(i+2j-2),(j<k).
\end{equation*}
\end{identity}
\begin{remark} The identity may easily  be proved directly.
\end{remark}
We thus obtain
\begin{proposition} The following formula holds:
\begin{equation*}
c_1(n,k)=\frac{2^{n-k}}{n!}\sum_{i=0}^k(-1)^{k-i}{k\choose i}\cdot\prod_{t=0}^{n-1}(i+2t).
\end{equation*}
\end{proposition}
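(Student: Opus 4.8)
The plan is to obtain the formula as an immediate consequence of the generating-function expansion already recorded in~(\ref{hehe}). As in the proof of the preceding proposition, the starting point is that $c_1(n,k)$ is, by definition, the coefficient of $x^n$ in the power series $[xg(x)]^k$. Since~(\ref{hehe}) provides a closed form for \emph{every} coefficient of $[xg(x)]^k$, it suffices to specialise the summation index $j$ appearing there to $j=n$ and to tidy up the resulting constant.

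Concretely, I would read off from~(\ref{hehe}) that the coefficient of $x^j$ in $[xg(x)]^k$ is
\[
\frac{1}{2^k}\left(\sum_{i=0}^k(-1)^{k-i}\binom{k}{i}\,i(i+2)\cdots(i+2j-2)\right)\frac{2^j}{j!}.
\]
Setting $j=n$ and using $c_1(n,k)=[x^n][xg(x)]^k$ then yields $c_1(n,k)$ with prefactor $\tfrac{2^n}{2^k}\cdot\tfrac{1}{n!}=\tfrac{2^{n-k}}{n!}$. It only remains to recognise that the finite product $i(i+2)\cdots(i+2n-2)$ is exactly $\prod_{t=0}^{n-1}(i+2t)$, which rewrites the inner sum in the stated form.

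There is essentially no analytic obstacle at this stage; the substantive work was already done in deriving~(\ref{hehe}) from $xg(x)=\tfrac12\bigl((1-4x)^{-1/2}-1\bigr)$ via the binomial theorem together with the generalized binomial series for $(1-4x)^{-i/2}$. The only points that require a little care are bookkeeping ones: confirming that the powers of $2$ combine to give precisely $2^{n-k}$, and checking that the product convention is consistent at the endpoints. In particular, the $i=0$ summand vanishes for $n\geq 1$, since the factor indexed by $t=0$ is then $i=0$, so its inclusion is harmless and the two ways of writing the product agree. Once these verifications are made, the identity follows directly.
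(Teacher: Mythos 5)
Your proposal is correct and follows exactly the paper's route: the paper also obtains the proposition by reading off the coefficient of $x^n$ from the expansion~(\ref{hehe}) of $[xg(x)]^k$, with the same bookkeeping $\tfrac{2^n}{2^k\,n!}=\tfrac{2^{n-k}}{n!}$ and the identification $i(i+2)\cdots(i+2n-2)=\prod_{t=0}^{n-1}(i+2t)$. Your remark that the $i=0$ summand vanishes for $n\geq 1$ is a harmless extra check consistent with the paper.
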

From equation (\ref{hehe}), we obtain
\begin{identity} The following formula holds:
\begin{equation*}\prod_{i=1}^{n}(n+i-1)=2^{n-1}(2n-1)!!.\end{equation*}
\end{identity}
\begin{remark} We note that a number of other combinatorial interpretations of our results may be found in Sloane's~\cite{slo}.
\end{remark}

\end{document}